\newcommand{\p}{\varphi}
\newcommand{\inv}{^{-1}}
\newcommand{\ov}{\overline}
\newcommand{\skel}[1]{^{(#1)}}
\newcommand{\dom}{\mathop{\boldsymbol d}}
\newcommand{\ran}{\mathop{\boldsymbol r}}
\newtheorem{Thm}{Theorem}
\newtheorem{Prop}[Thm]{Proposition}
\theoremstyle{definition}
\theoremstyle{remark}
\theoremstyle{remark}
\theoremstyle{remark}
\theoremstyle{remark}
\theoremstyle{remark}
\theoremstyle{remark}
\theoremstyle{remark}
\newtheorem*{Claim*}{Claim}}
\numberwithin{equation}{section}
\title[A note on projections in \'etale groupoid algebras]{A note on projections in \'etale groupoid algebras and diagonal preserving homomorphisms}
\author{Benjamin Steinberg}
\address[B.~Steinberg]{%
    Department of Mathematics\\
    City College of New York\\
    Convent Avenue at 138th Street\\
    New York, New York 10031\\
    USA}
\email{bsteinberg@ccny.cuny.edu}
\date{December 15, 2023}
\keywords{\'etale groupoid,  groupoid algebra, diagonal-preserving isomorphism, Leavitt path algebras}
\subjclass[2020]{16S88, 16S99, 22A22}
\thanks{The author was supported by a Simons Foundation Collaboration Grant, award number 849561, and the Australian Research Council Grant DP230103184.}
\begin{document}

\begin{abstract}
Carlsen (Adv.~Math, 2018) showed that any $\ast$-homo\-mor\-phi\-sm between Leavitt path algebras over $\mathbb Z$ is automatically diagonal preserving and hence induces an isomorphism of boundary path groupoids.  His result works over conjugation-closed subrings of $\mathbb C$ enjoying certain properties.  In this paper, we characterize the rings considered by Carlsen as precisely those rings for which every $\ast$-homomorphism of algebras of Hausdorff ample groupoids is automatically diagonal preserving.  Moreover, the more general groupoid result has a simpler proof.
\end{abstract}

\maketitle

\section{Introduction}
The paper~\cite{cuntzsplice} caused quite a stir at the time because it showed that the Cuntz splice does not preserve $\ast$-isomorphism type of Leavitt path algebras over $\mathbb Z$ (or certain more general subrings of $\mathbb C$ closed under complex conjugation).  The Cuntz splice preserves isomorphism type of graph $C^*$-algebras and it is a major open question whether it preserves isomorphism type of complex Leavitt path algebras.  The result of~\cite{cuntzsplice} covered a fairly general class of graphs.    The key idea was to show that projections in Leavitt path algebras over such rings are quite restricted, and hence any $\ast$-isomorphism is forced to be diagonal-preserving.  This was extended to arbitrary Leavitt path algebras by Carlsen in~\cite{CarlsenLeavittZ}.  A series of results by various authors~\cite{reconstruct,CarlsenSteinberg,mydiagonal} shows that diagonal-preserving isomorphisms of algebras of Hausdorff ample groupoids forces, under mild hypotheses, an isomorphism of the corresponding groupoids.  Hence over rings in which $\ast$-isomorphisms are automatically diagonal-preserving, the groupoid is entirely encoded by its $\ast$-algebra.

The proofs in~\cite{cuntzsplice,CarlsenLeavittZ} unfortunately work with Leavitt path algebras as given by generators and relations, rather than as groupoid algebras, rendering them quite technical.  Here we give a very simple proof that for the kinds of rings considered in~\cite{cuntzsplice,CarlsenLeavittZ} (and only for those rings), every $\ast$-homomorphism of algebras of Hausdorff ample groupoids is automatically diagonal preserving.  We give several equivalent characterizations and prove some basic properties of such rings.

\section{The main result}
We follow the analytic conventions for groupoids, in particular, we identify objects and identity arrows.
An \emph{ample} groupoid $\mathscr G$ is a topological groupoid whose unit space $\mathscr G\skel 0$ is locally compact, Hausdorff and totally disconnected and whose range map $\ran\colon \mathscr G\to \mathscr G\skel 0$ is a local homeomorphism.  In this paper, all ample groupoids will be assumed Hausdorff.  In this case, the unit space $\mathscr G\skel 0$ is a clopen subspace of $\mathscr G$.  An open subset $U\subseteq \mathscr G$ is a (local) \emph{bisection} if $\dom|_U,\ran|_U$ are injective.  The compact bisections form a basis for the topology on $\mathscr G$.

If $R$ is a commutative ring with unit, the algebra $R\mathscr G$ of $\mathscr G$~\cite{mygroupoidalgebra} consists of the compactly  supported locally constant functions $f\colon \mathscr G\to R$ under convolution \[f\ast g(\gamma)=\sum_{\ran(\alpha)=\ran(\gamma)}f(\alpha)g(\alpha\inv \gamma).\]
The complex algebra $\mathbb C\mathscr G$ is a $\ast$-algebra with $f^\ast(\gamma) = \ov{f(\gamma\inv)}$.  From now on $R$ will always be a subring of $\mathbb C$ closed under complex conjugation.  The algebra $R\mathscr G$ is then a $\ast$-subalgebra of $\mathbb C\mathscr G$, and so we can talk about things like projections and unitaries in $R\mathscr G$.  For instance, if $U$ is a compact open subset of $\mathscr G\skel 0$, then the indicator function $1_U$ is a projection.  By a $\ast$-algebra homomorphism $\p\colon R\mathscr G\to R\mathscr H$, we mean an $R$-algebra homomorphism such that $\p(f^\ast)=\p(f)^\ast$ for all $f$.

Let us denote by $D(R\mathscr G)$ the subalgebra of $R\mathscr G$ consisting of functions supported on  $\mathscr G\skel 0$.  Then $D(R\mathscr G)$ is a commutative $\ast$-subalgebra of $R\mathscr G$, often referred to as the \emph{diagonal subalgebra}.  Note that $D(R\mathscr G)$ is spanned over $R$ by the projections $1_U$ with $U\subseteq \mathscr G\skel 0$ compact open.

A homomorphism $\p\colon R\mathscr G\to R\mathscr H$ of groupoid algebras is \emph{diagonal preserving} if $\p(D(R\mathscr G))\subseteq D(R\mathscr H)$.  We say that an isomorphism $\p$ is a diagonal preserving isomorphism if $\p$ and $\p\inv$ are diagonal preserving, or equivalently $\p$ is an isomorphism taking $D(R\mathscr G)$ onto $D(R\mathscr H)$.

An element $n\in R\mathscr G$ is a \emph{normalizer} of $D(R\mathscr G)$ if there is an element $n'$ with $nn'n=n$, $n'nn'=n'$ and $nD(R\mathscr G)n'\cup n'D(R\mathscr G)n\subseteq D(R\mathscr G)$.  It is easy to see that any $f\in R\mathscr G$ whose support is a bisection is a normalizer.  In~\cite{mydiagonal} $\mathscr G$ was defined to satisfy the local bisection hypothesis with respect to $R$ if the only normalizers are those with support a bisection.  For example, a group $G$ satisfies the local bisection hypothesis with respect to $R$ if and only if the group ring $RG$ has only trivial units. It was shown in~\cite{mydiagonal} that if there is a dense set of units $x\in \mathscr G\skel 0$ such that the group ring over $R$ of the isotropy group at $x$ of the interior of the isotropy bundle of $\mathscr G$ has only trivial units, then $\mathscr G$ satisfies the local bisection hypothesis.  This includes all boundary path groupoids of graphs and higher rank graphs.   The main theorem of~\cite{mydiagonal} admits the following as a special case.

\begin{Thm}\label{t:construct.from.bisection}
Let $R$ be an integral domain and let $\mathscr G,\mathscr G'$ be Hausdorff ample groupoids such that $\mathscr G$ satisfies the local bisection hypothesis. Then the following are equivalent.
\begin{enumerate}
\item There is an isomorphism $\p\colon \mathscr G\to \mathscr G'$.
\item There is a diagonal-preserving isomorphism $\Phi\colon R\mathscr G\to R\mathscr G'$ of $R$-algebras.
\end{enumerate}
\end{Thm}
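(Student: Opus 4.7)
The implication \textup{(1)} $\Rightarrow$ \textup{(2)} is routine: a groupoid isomorphism $\p\colon \mathscr G\to \mathscr G'$ induces the $R$-algebra isomorphism $\Phi(f) = f\circ \p\inv$, and since $\p$ carries $\mathscr G\skel 0$ onto $\mathscr G'\skel 0$, it sends $D(R\mathscr G)$ onto $D(R\mathscr G')$.

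For the converse my plan is to recover each groupoid from its algebra-plus-diagonal pair and show that $\Phi$ transports the data.  First, because $R$ is an integral domain, every idempotent of $D(R\mathscr G)$ is pointwise $0$ or $1$ on $\mathscr G\skel 0$, so the idempotents are exactly the indicators $1_U$ of compact open $U\subseteq \mathscr G\skel 0$ and form a generalized Boolean algebra Stone-dual to $\mathscr G\skel 0$.  Restricting $\Phi$ to these idempotents then produces, via Stone duality, a homeomorphism $\p_0\colon \mathscr G\skel 0\to \mathscr G'\skel 0$ with $\Phi(1_U) = 1_{\p_0(U)}$.

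Next, for each compact bisection $U\subseteq \mathscr G$ the function $1_U$ is a normalizer with partner $1_{U\inv}$, so $\Phi(1_U)$ is a normalizer of $D(R\mathscr G')$ with partner $\Phi(1_{U\inv})$; moreover $\Phi(1_U)\ast \Phi(1_{U\inv}) = 1_{\p_0(\ran U)}$ and $\Phi(1_{U\inv})\ast \Phi(1_U) = 1_{\p_0(\dom U)}$.  The key step is to argue that $\Phi(1_U)$ is itself the indicator $1_V$ of a unique compact bisection $V\subseteq \mathscr G'$: the local bisection hypothesis on $\mathscr G$ pins down normalizers of $D(R\mathscr G)$ as bisection-supported, and the integral-domain hypothesis (no zero divisors, hence no cancellations in convolution sums) allows the partner relations on the $\mathscr G'$-side to be unwound to the $\{0,1\}$-valued conclusion.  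The assignment $U\mapsto V$ is then an isomorphism of the inverse semigroups of compact bisections extending $\p_0$.  Since an ample groupoid is recovered as the groupoid of germs of its compact-bisection inverse semigroup acting on its unit space, this inverse semigroup isomorphism descends to a groupoid isomorphism $\p\colon \mathscr G\to \mathscr G'$ restricting to $\p_0$ on units.

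The main obstacle is the previous step: extracting a genuine indicator function of a bisection of $\mathscr G'$ from the abstract normalizer relations satisfied by $\Phi(1_U)$.  This is where the local bisection hypothesis on $\mathscr G$ and the integral-domain hypothesis on $R$ are jointly indispensable, and where the argument rests on the reconstruction machinery from \cite{mydiagonal} whose main theorem directly specializes to give the equivalence.
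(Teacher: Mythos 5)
First, a point of comparison: the paper does not prove this theorem at all; it is quoted verbatim as a special case of the main theorem of \cite{mydiagonal}. So your concluding appeal to the reconstruction machinery of that reference is exactly what the author does, and your treatment of (1) $\Rightarrow$ (2) and of the recovery of the unit spaces from the idempotents of the diagonal (which, over an integral domain, are precisely the indicators of compact open subsets of $\mathscr G\skel 0$) is correct.

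However, the step you single out as the key one is false as stated: $\Phi(1_U)$ need not be the indicator function of a compact bisection of $\mathscr G'$, and no hypothesis of the theorem forces a ``$\{0,1\}$-valued conclusion.'' The partner relations you list are satisfied equally well by $-1_V$ (with partner $-1_{V\inv}$), and more generally by $d\ast 1_V$ for a suitable invertible diagonal factor $d$; the relations $nn'n=n$, $n'nn'=n'$, $nn'=1_{\p_0(\ran(U))}$, $n'n=1_{\p_0(\dom(U))}$ and $nD(R\mathscr G')n'\cup n'D(R\mathscr G')n\subseteq D(R\mathscr G')$ only determine $n$ up to such a factor. A concrete counterexample inside the hypotheses: take $R=\mathbb Z$ and $\mathscr G=\mathscr G'=\mathbb Z/2\mathbb Z$ (which satisfies the local bisection hypothesis, since $\mathbb Z[\mathbb Z/2\mathbb Z]$ has only trivial units), and let $\Phi$ be the diagonal-preserving $\ast$-automorphism fixing the diagonal and sending $\delta_g\mapsto -\delta_g$ for the nontrivial element $g$; then $\Phi(1_{\{g\}})=-1_{\{g\}}$ is not an indicator. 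This is precisely why the construction in \cite{mydiagonal} does not attempt to produce a bisection $V$ with $\Phi(1_U)=1_V$, but instead builds the groupoid of \emph{germs} of normalizers over the spectrum of the diagonal, identifying two normalizers at a point when they agree after multiplication by diagonal elements not vanishing there; the diagonal ambiguity above is exactly what that germ relation quotients out. Relatedly, the local bisection hypothesis on $\mathscr G$ is not what controls the shape of $\Phi(1_U)$ inside $R\mathscr G'$ (that would require a hypothesis on $\mathscr G'$); it is what guarantees that $\mathscr G$ itself is recovered from the germ groupoid of the pair $(R\mathscr G, D(R\mathscr G))$, after which $\Phi$ identifies the two germ groupoids. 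So your overall architecture is the right one and matches the source the paper cites, but the ``main obstacle'' must be reformulated in terms of germs of normalizers rather than indicator functions before the argument goes through.
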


The papers~\cite{cuntzsplice,CarlsenLeavittZ} investigated the case when every $\ast$-algebra homomorphism must automatically be diagonal preserving  in the setting of Leavitt path algebras.  We consider here the general case.

For $n\geq 1$, let $\mathcal R_n$ be the discrete groupoid associated to the universal equivalence relation on $\{1,\ldots, n\}$.   So $\mathcal R_n$ has these $n$ objects and a unique isomorphism $(i,j)$ from $j$ to $i$.  Multiplication follows the rule
\[(i,j)(k,\ell) = \begin{cases} (i,\ell), & \text{if}\ j=k\\ \text{undefined}, & \text{else}\end{cases}\]  and the inversion is given by $(i,j)\inv = (j,i)$.  It is not difficult to see that $R\mathcal R_n$ is $\ast$-isomorphic to $M_n(R)$ via $f\mapsto [f((i,j))]$ with inverse $A\mapsto f_A$ where $f_A((i,j))=A_{ij}$.  The diagonal subalgebra $D(R\mathcal R_n)$ consists of those functions supported on the diagonal and is sent via the above isomorphism onto the subalgebra $D_n(R)$ of diagonal matrices.  This explains the nomenclature.

The following theorem greatly generalizes and expands on~\cite{CarlsenLeavittZ,cuntzsplice}, where only Leavitt path algebras were considered.

\begin{Thm}\label{t:main}
Let $R$ be a subring of $\mathbb C$ closed under conjugation.  Then the following are equivalent.
\begin{enumerate}
\item For every $n\geq 1$, if $v\in R^n$ is a unit vector, then $v$ has exactly one nonzero entry, i.e., if $1=\sum_{i=1}^n|r_i|^2$, then only one $r_i\neq 0$.
\item If $r_1=\sum_{i=1}^n |r_i|^2$ with $r_1,\ldots, r_n\in R$, then $r_2=\cdots=r_n=0$.
\item If $\mathscr G$ is a Hausdorff ample groupoid, then each projection in $R\mathscr G$ belongs to the diagonal subalgebra $D(R\mathscr G)$.
\item Every $\ast$-homomorphism $R\mathscr G\to R\mathscr H$ of algebras of Hausdorff ample groupoids is diagonal preserving.
\item Every $\ast$-isomorphism $R\mathscr G\to R\mathscr H$ of algebras of Hausdorff ample groupoids is diagonal preserving.
\item For every $n\geq 1$, every unitary matrix in $GL_n(R)$ is monomial (i.e., has exactly one nonzero entry in every row and column).
\end{enumerate}
\end{Thm}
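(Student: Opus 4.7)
The plan is to prove the cycle $(1)\Rightarrow(2)\Rightarrow(3)\Rightarrow(4)\Rightarrow(5)\Rightarrow(6)\Rightarrow(1)$. For $(2)\Rightarrow(3)$, if $p\in R\mathscr G$ is a projection and $\gamma\in\mathscr G^{(0)}$ is a unit, then $p^*=p$ combined with $p^2=p$ evaluated at $\gamma$ yields the finite identity
\[
p(\gamma)=\sum_{\ran(\alpha)=\gamma}|p(\alpha)|^2,
\]
to which (2) applies with $r_1=p(\gamma)$ and the remaining $r_i$'s the nonzero values of $p$ on $\{\alpha\ne\gamma:\ran(\alpha)=\gamma\}$; this forces $p$ to vanish off $\mathscr G^{(0)}$, so $p\in D(R\mathscr G)$. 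Then $(3)\Rightarrow(4)$ is immediate since $D(R\mathscr G)$ is linearly spanned by the projections $1_U$, and $(4)\Rightarrow(5)$ is trivial. For $(5)\Rightarrow(6)$, conjugation by any unitary $U\in GL_n(R)$ is a $\ast$-automorphism of $R\mathcal R_n=M_n(R)$, which by (5) preserves $D_n(R)$; the requirement that the off-diagonal entries $(UE_{ii}U^*)_{jk}=U_{ji}\overline{U_{ki}}$ (with $j\ne k$) vanish forces each column of $U$ to have at most one nonzero entry, with the analogous statement for rows coming from applying the same argument to $U^*$.

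The heart of the proof is $(1)\Rightarrow(2)$, which I would handle by a doubling trick. Given $r_1=\sum_{i=1}^n|r_i|^2$, one first notes that $r_1$ is real, so $\sum_{i\ge 2}|r_i|^2=r_1-r_1^2$. Consider the doubled vector
\[
V=(r_1,r_2,\ldots,r_n,r_1-1,r_2,\ldots,r_n)\in R^{2n}.
\]
A direct expansion gives $\|V\|^2=r_1^2+(r_1-r_1^2)+(1-r_1)^2+(r_1-r_1^2)=1$, so by (1) exactly one coordinate of $V$ is nonzero. But each $r_i$ with $i\ge 2$ occurs twice as a coordinate of $V$, forcing all such $r_i$ to vanish.

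Finally, for $(6)\Rightarrow(1)$, let $v\in R^n$ be a unit vector with $k\ge 2$ nonzero entries. When $k\ge 3$, the matrix $U=I-2vv^*$ is a self-adjoint unitary in $GL_n(R)$, and for any index $i_1$ with $v_{i_1}\ne 0$ the row $i_1$ of $U$ contains the two nonzero off-diagonal entries $-2v_{i_1}\bar v_{i_2}$ and $-2v_{i_1}\bar v_{i_3}$ (where $v_{i_2},v_{i_3}\ne 0$), contradicting the monomiality guaranteed by (6). The main subtlety is the borderline case $k=2$, where $I-2vv^*$ can itself be monomial; but then the two nonzero entries of $v$ satisfy $|v_{i_1}|^2+|v_{i_2}|^2=1$ and so form a unit vector $(v_{i_1},v_{i_2})\in R^2$, and the matrix $\begin{pmatrix} v_{i_1} & -\bar v_{i_2} \\ v_{i_2} & \bar v_{i_1}\end{pmatrix}$ is a non-monomial unitary in $GL_2(R)$, again contradicting (6). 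Hence $k=1$, proving (1).
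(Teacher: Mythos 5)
Your proof is correct and follows essentially the same route as the paper's: the same doubled vector (up to reordering and a sign on one coordinate) for $(1)\Rightarrow(2)$, the same evaluation of $p=pp^*$ at units for $(2)\Rightarrow(3)$, the same use of $R\mathcal R_n\cong M_n(R)$ for $(5)\Rightarrow(6)$, and the same matrices $I-2vv^*$ and $\begin{bmatrix} v_{i_1} & -\ov v_{i_2}\\ v_{i_2} & \ov v_{i_1}\end{bmatrix}$ for $(6)\Rightarrow(1)$. The only organizational difference is that you split $(6)\Rightarrow(1)$ into the cases of at least three versus exactly two nonzero entries, whereas the paper argues uniformly by first deducing $|v_i|^2=|v_j|^2=1/2$ from monomiality of $I-2vv^*$; both are valid.
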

\begin{proof}
The first implication is in~\cite{CarlsenLeavittZ}, but we repeat the proof for the readers convenience.  If $r_1=\sum_{i=1}^n |r_i|^2$, then $r_1\geq 0$.  Therefore, \[|1-r_1|^2+\sum_{i=2}^n|r_i|^2+\sum_{i=1}^n |r_i|^2 = 1-2r_i+|r_1|^2+\sum_{i=2}^2|r_i|^2+\sum_{i=1}^n|r_i|^2 = 1.\]  If any $r_i\neq 0$, then $r_1>0$, and so we must have that $r_2=\cdots=r_n=0$ by (1).  Assume now (2) and let $f\in R\mathscr G$ be a projection.  Then $f=ff^*$.  Let $x\in \mathscr G\skel 0$.  Then \[f(x) = \sum_{\ran(\gamma)=x}f(\gamma)f^*(\gamma\inv)=\sum_{\ran(\gamma)=x}|f(\gamma)|^2.\]  Thus by (2), we must have that if $\ran(\gamma)=x$ and $\gamma\neq x$, then $f(\gamma)=0$.   Therefore, $f$ is supported on $\mathscr G\skel 0$ and hence $f\in D(R\mathscr G)$ as $\mathscr G$ is Hausdorff.  It is immediate that (3) implies (4) as $D(R\mathscr G)$ is spanned over $R$ by projections and each projection in $R\mathscr H$ is diagonal by (3).  Trivially (4) implies (5).  Recalling that $R\mathcal R_n\cong M_n(R)$ via a $\ast$-isomorphism taking $D(R\mathcal R_n)$ onto $D_n(R)$, for (5) implies (6) it suffices to observe that conjugation by a unitary matrix is a $\ast$-automorphism, and the normalizer in $GL_n(R)$ of $D_n(R)$ is the group of monomial matrices.   Finally, suppose that (6) holds and that $v\in R^n$ is a unit vector (which we view as a column vector).  Then $vv^*$ is a projection, and so $U=I-2vv^*$ is a self-adjoint unitary matrix.  Suppose that $v_i\neq 0\neq v_j$ with $i\neq j$.  Then $U_{ij} = -2v_i\ov{v_j}\neq 0$,  $U_{ii} = 1-2|v_i|^2$ and $U_{jj}= 1-2|v_j|^2$.  Thus if $U$ is monomial, then we must have $|v_i|^2=|v_j|^2=1/2$.  But then \[\begin{bmatrix}v_1 & -\ov v_2\\ v_2 & \ov v_1\end{bmatrix}\] is unitary and not monomial, contradicting (6).  Thus $v$ has a single nonzero entry.  This completes the proof.
\end{proof}

We remark that it was claimed without proof in~\cite{CarlsenLeavittZ} that there are rings satisfying (2) but not (1).  In fact, it is easy to see directly that (2) implies (1) since if $1=\sum_{i=1}^n|r_i|^2$, where without loss of generality $r_1\neq 0$, then $|r_1|^2 = \sum_{i=1}^n|r_1r_i|^2$, and so $r_i=0$ for $i\geq 2$ by (2).

Those rings satisfying the equivalent conditions of Theorem~\ref{t:main} were called \emph{kind} in~\cite{CarlsenLeavittZ} and were said to have a unique partition of the unit in~\cite{cuntzsplice}.  We prefer `kind'.
Rings with the property that $c_1^2+\cdots+c_n^2=1$ implies $c_i=\pm 1$ for a unique $i$, and $c_j=0$ otherwise, were studied in~\cite{L-ring} under the name $L$-rings.  Many of the observations in that paper about $L$-rings apply to kind rings.  Note that a subring of $\mathbb R$ is kind if and only if it is an $L$-ring.  Any $L$-ring which is a subring of the complex numbers closed under complex conjugation must be kind.  There are however, kind rings that are not $L$-rings.  For instance $\mathbb Z[i]$ is kind (see Proposition~\ref{p:propsofkind} below), but $2^2+i^2+i^2+i^2=1$, so it is not an $L$-ring.

\begin{Prop}\label{p:propsofkind}
Let $R$ be a subring of $\mathbb C$ closed under complex conjugation and let $F$ be the field of fractions of $R$.
\begin{enumerate}
\item If $R$ is kind, then $1/n\notin R$ for all $n\geq 2$.
\item If $|r|<1$ implies $r=0$ for all $r\in R$, then $R$ is kind.  In particular, $\mathbb Z$ is kind, and if $a\in [1,\infty)$, then $\mathbb Z[ia]$ is kind.
\item A directed union of kind rings is kind.
\item If $R$ is kind and $B\subseteq \mathbb R$ is algebraically independent over $F$, then $R[B]$ is kind.  In particular, $\mathbb Z[\pi]$ and $\mathbb Z[e]$ are kind.
\item If $R$ is kind and $n\in \mathbb Z$ with $\sqrt{n}\notin F$, then $R[\sqrt n]$ is kind.  In particular, if $R$ is integrally closed in $F$ and $\sqrt{n}\notin R$, then $R[\sqrt{n}]$ is kind.
\end{enumerate}
\end{Prop}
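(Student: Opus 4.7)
My plan is to verify the five parts one-by-one using Theorem~\ref{t:main} as the working definition of kindness, primarily condition~(1) (a unit vector in $R^n$ has a unique nonzero entry).

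Part~(1) is immediate: if $1/n \in R$ with $n \geq 2$, then $(1/n,\ldots,1/n)\in R^{n^2}$ is a unit vector with $n^2>1$ nonzero entries. Part~(2) uses that every nonzero $r\in R$ now has $|r|\geq 1$, so $|r|^2\geq 1$; in $1=\sum|r_i|^2$ at most one $r_i$ can then be nonzero. The examples $\mathbb{Z}$ and $\mathbb{Z}[ia]$ for $a\geq 1$ satisfy this hypothesis since a nonzero $m+ina$ has $|m+ina|^2=m^2+n^2a^2\geq 1$. Part~(3) is essentially tautological: the finitely many entries in a relation $1=\sum|r_i|^2$ lie in a common member of the directed family, which is kind.

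For part~(4), I use algebraic independence to lift a relation $1=\sum|p_i(b)|^2$ in $\mathbb{C}$ (where each $p_i$ involves only finitely many $b_1,\ldots,b_k\in B$) to the polynomial identity $1=\sum p_i\bar p_i$ in $R[x_1,\ldots,x_k]$; the key point is that $B\subseteq\mathbb R$ forces conjugation to act coefficient-wise on $p_i$. Then I examine the leading monomial in lex order: each nonzero $p_i\bar p_i$ has leading term $|c_{i,\alpha_i}|^2 x^{2\alpha_i}$, so if $M$ is the maximum of the $2\alpha_i$, the coefficient of $x^M$ in the sum is $\sum_{i:\,2\alpha_i=M}|c_{i,\alpha_i}|^2>0$, forcing $M=0$. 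Hence each $p_i$ is a constant $c_i\in R$ with $1=\sum|c_i|^2$, and kindness of $R$ finishes. The specific examples follow since $\pi,e$ are transcendental over $\mathbb Q$.

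Part~(5) is where the real work lies. Writing elements of $R[\sqrt n]$ as $a+b\sqrt n$ with $a,b\in R$ and using $\overline{\sqrt n}=\pm\sqrt n$ according to the sign of $n$, a direct expansion of $|a_i+b_i\sqrt n|^2$ combined with the $F$-linear independence of $\{1,\sqrt n\}$ (which is where $\sqrt n\notin F$ enters) gives in both sign cases $\sum_i(|a_i|^2+|n||b_i|^2)=1$. When $|n|\geq 2$, I rewrite this as $\sum_i|a_i|^2+\sum_i\sum_{j=1}^{|n|}|b_i|^2=1$, a unit-vector identity in $R^{k+|n|k}$ with each $|b_i|^2$ repeated at least twice; kindness of $R$ forces all $b_i=0$ and then isolates a unique nonzero $a_i$. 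The remaining case $n=-1$ gives directly $\sum(|a_i|^2+|b_i|^2)=1$, to which kindness of $R$ applies at once. The final ``integrally closed'' clause is the observation that $\sqrt n$ is a root of $x^2-n$, so if $R$ is integrally closed in $F$ then $\sqrt n\in F$ would already force $\sqrt n\in R$.

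The main obstacle I anticipate is part~(4), where one must handle the possibility that several $p_i$ share the same leading multi-exponent; this is precisely where positivity of $|c_{i,\alpha_i}|^2$ rules out cancellation. Part~(5) looks delicate because of the sign distinction, but the trick of absorbing the coefficient $|n|$ by repeating $|b_i|^2$ reduces everything cleanly to kindness of $R$ once $|n|\geq 2$, with $n=-1$ handled on its own.
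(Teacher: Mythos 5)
Your proposal is correct and follows essentially the same route as the paper's proof: each part uses the unit-vector characterization, with (4) resting on transcendence plus positivity of the leading coefficient of $\sum p_i\ov{p_i}$ to force all $p_i$ constant (the paper reduces to one variable by induction where you use lex order directly, a cosmetic difference), and (5) resting on the $F$-linear independence of $1,\sqrt n$ followed by absorbing the factor $|n|$ by repeating $|b_i|^2$, which is exactly the paper's implicit use of ``$|n|$ is a positive integer.'' No gaps.
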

\begin{proof}
If $1/n\in R$, then $(1/n,1/n,\ldots,1/n)\in R^{n^2}$ is a unit vector, and so $R$ is not kind.  This proves (1).  The second item is clear since if $1=|r_1|^2+\cdots+|r_n|^2$, then $|r_i|^2\geq 1$ for at most one value of $i$.  Item (3) is clear since a unit vector has finitely many entries.   For (4), first note that $R[B]$ is closed under complex conjugation since $B\subseteq \mathbb R$. We may assume by (3) that $B$ is finite and then by induction it suffices to handle the case $R[a]$ where $a\in \mathbb R$ is transcendental over $F$.  Suppose that $(f_1(a),\ldots, f_n(a))\in R[a]^n$ is a unit vector with the $f_i\in R[x]$.  Then since $a\in \mathbb R$, we have that $|f_i(a)|^2 = (f_i\ov f_i)(a)$ and $f_i\ov f_i$ is a polynomial over $R$ with real coefficients and with leading coefficient the square of the absolute value of the leading coefficient of $f_i$.  Therefore, if  some $f_i$ is a nonconstant polynomial, then $g(x)=f_1\ov f_1+\cdots +f_n\ov f_n-1$ is a nonzero polynomial of degree twice the maximum degree of the $f_i$ with $g(a)=0$.  This contradicts that $a$ is transcendental over $F$.  Thus $f_1,\ldots, f_n$ are constant polynomials, and so $(f_1(a),\ldots, f_n(a))\in R^n$ and hence has exactly one nonzero entry since $K$ is kind.  Finally, if $\sqrt{n}\notin F$ and $(a_1+b_1\sqrt{n},\ldots, a_m+b_m\sqrt{n})\in R[\sqrt{n}]^m$ is a unit vector, then \[1=\sum_{i=1}^m (|a_i|^2+|b_i|^2|n|)+\sqrt{n}\sum_{i=1}^m (\pm a_i\ov b_i+\ov a_ib_i).\]  Since $\sqrt{n}\notin F$, we must have $\sum_{i=1}^m (\pm a_i\ov b_i+\ov a_ib_i)=0$.  Since $|n|$ is a positive integer, we deduce using that $R$ is kind that there is a unique $i$ with either $a_i\neq 0$ or $b_i\neq 0$.  We conclude that $R[\sqrt{n}]$ is kind.
\end{proof}


\end{document}